\newcommand{\pgfplotsdrawaxis}{\pgfplots@draw@axis}
\pgfplotsset{only axis on top/.style={axis on top=false, after end axis/.code={
			\pgfplotsset{axis line style=opaque, ticklabel style=opaque, tick style=opaque,
				grid=none}\pgfplotsdrawaxis}}}
\newcommand{\drawge}{-- (rel axis cs:1,0) -- (rel axis cs:1,1) -- (rel axis cs:0,1) \closedcycle}
\theoremstyle{plain}
\newtheorem{theorem}{Theorem}[section]
\newtheorem{proposition}[theorem]{Proposition}
\newtheorem{lemma}[theorem]{Lemma}
\newtheorem{corollary}[theorem]{Corollary}
\theoremstyle{definition}
\newtheorem{definition}[theorem]{Definition}
\newtheorem{example}[theorem]{Example}
\newtheorem{remark}[theorem]{Remark}
\newtheorem{notation}[theorem]{Notation}
\DeclareMathOperator*{\msup}{msup}
\begin{document}
\def\sect#1{\section*{\leftline{\large\bf #1}}}
\def\th#1{\noindent{\bf #1}\bgroup\it}
\def\endth{\egroup\par}
\def\minf{\textnormal{minf}}
\def\spn{\textnormal{span}}
\def\N{\mathbb{N}}
\def\R{\mathbb{R}}
\def\Q{\mathbb{Q}}
\def\W{\mathcal{W}}
\def\V{\mathcal{V}}
\def\L{\mathcal{L}}
\def\D{\mathcal{D}}

\title[Riesz-Kantorovich Formulas for Operators on Multi-Wedged Spaces]{Riesz-Kantorovich Formulas for Operators on Multi-Wedged Spaces}

\author[C. Schwanke]{Christopher Schwanke}
\email{schwankc326@gmail.com}
\address{Unit for BMI\\North-West University\\Private Bag X6001\\Potchefstroom 2520\\South Africa}

\author[M. Wortel]{Marten Wortel}
\email{marten.wortel@gmail.com}
\address{Unit for BMI\\North-West University\\Private Bag X6001\\Potchefstroom 2520\\South Africa}

\subjclass[2010]{06F20, 46A40}

\keywords{multi-wedged spaces, multi-lattices, Riesz-Kantorovich formulas}

\begin{abstract}
We introduce the notions of multi-suprema and multi-infima for vector spaces equipped with a collection of wedges, generalizing the notions of suprema and infima in ordered vector spaces. Multi-lattices are vector spaces closed under multi-suprema and multi-infima and are thus an abstraction of vector lattices. The Riesz decomposition property in the multi-wedged setting is also introduced, leading to Riesz-Kantorovich formulas for multi-suprema and multi-infima in certain spaces of operators.
\end{abstract}

\maketitle

\section{Introduction}\label{S:intro}

The significance of the Riesz-Kantorovich formulas is well known not only in duality theory in ordered vector spaces but also in equilibrium theory in economics (see \cite{ATY2,ATY}). Surely, such formulas will also be needed in the emerging theory of multi-wedged spaces, i.e., vector spaces equipped with a collection of wedges. The goal of this paper is to prove these formulas in this more general setting.

Two recent papers \cite{JeuMess,Mess} have investigated Banach spaces equipped with an arbitrary collection of closed wedges. Indeed, \cite[Theorem 4.1]{JeuMess} generalizes And\^o's Theorem \cite[Lemma~1]{Ando} to such spaces, and \cite{Mess} extends geometric duality theory of real preordered Banach spaces to multi-wedged Banach spaces. Since the Riesz-Kantorovich formulas do not involve any topology, we consider multi-wedged spaces not equipped with any topological structure.

Required for the construction of multi-wedged Riesz-Kantorovich formulas is the generalization of the notion of suprema in ordered vector spaces to multi-wedged spaces. In this prospect, recall that for an ordered vector space $(E,K)$ and a collection $(x_{i})_{i\in I}$ in $E$ it is true that $z=\underset{i\in I}{\sup}\{x_{i}\}$ if and only if $\bigcap_{i\in I}(x_{i}+K)=z+K$. (see \cite[Theorem 1.16]{AT}). More generally, if $E$ is a vector space, $(W_{i})_{i\in I}$ is a collection of wedges in $E$, and $(x_{i})_{i\in I}$ is a collection of elements of $E$ then any $z\in E$ that satisfies
\[
\bigcap_{i\in I}(x_{i}+W_{i})=z+\bigcap_{i\in I}W_{i}
\]
can be viewed as a generalized supremum, which we call a multi-supremum, of $(x_{i},W_{i})_{i\in I}$. In order for such a multi-supremum to exist, $(x_{i},W_{i})_{i\in I}$ must be multi-bounded above, meaning that $\bigcap_{i\in I}(x_{i}+W_{i})\neq\varnothing$. Multi-wedged spaces in which all multi-bounded above collections $(x_{i},W_{i})_{i\in I}$ with $|I|<\kappa+1$ are closed under multi-suprema are called $\kappa$-multi-lattices.

The venturing from vector lattices to the more general multi-lattices requires a few compromises, some of which come as surprises.
\begin{itemize}
	\item[(1)] Multi-suprema are generally not unique [Remark~\ref{R:setofmsup}].
	\item[(2)] An $n$-multi-lattice need not be an $(n+1)$-multi-lattice [Example~\ref{E:not_n+1}].
	\item[(3)] An $n$-multi-lattice need not have the Riesz decomposition property [Example~\ref{E:noRDP}].
\end{itemize}

As alluded to above, the Riesz decomposition property (as well as the concept of Dedekind completeness) is expanded to the multi-wedged setting in a manner suitable for obtaining Riesz-Kantorovich formulas for multi-bounded above collections of operators between a multi-wedged space with this generalized Riesz decomposition property and a Dedekind complete multi-lattice with a single wedge [Theorem~\ref{T:main}]. Introspection of the proof of Theorem~\ref{T:main} reveals that the space of order bounded operators $\L_{b}(E,F)$ is a Dedekind complete vector lattice when $E$ is a preordered vector space with generating wedge and the Riesz decomposition property and $F$ is a Dedekind complete multi-lattice equipped with a single cone [Corollary~\ref{C:classic}].

Both authors would like to acknowledge that the idea of multi-lattices and the possibility of multi-lattices being in duality with a generalized Riesz decomposition property were first concocted by Rabee Tourky in a conversation with Miek Messerschmidt at Australian National University in April 2014. We are grateful to Miek for suggesting the topic to us.

\section{Multi-Wedged Spaces and Multi-Lattices}\label{S:prelims}

We refer the reader to \cite{AT} for any unexplained terminology and basic results regarding wedges, cones, preordered and ordered vector spaces, and vector lattices. All vector spaces in this paper are over $\R$.

For a vector space $E$, we call a nonempty subset $W$ of $E$ a \textit{wedge} if $W+W\subseteq W$ and $\lambda W\subseteq W$ for all $\lambda\geq 0$. A wedge $K$ that satisfies $K\cap(-K)=\{0\}$ is called a \textit{cone}. Any wedge $W$ can be written as the sum of the vector space $W\cap(-W)$ and a cone. For brevity, we denote $W\cap(-W)$ by $\mathcal{D}(W)$.

Given an indexed collection $(W_{i})_{i\in I}$ of wedges in a vector space $E$, we define
\[
\sum_{i\in I}W_{i}=\left\{\sum_{k=1}^{n}x_{k}:x_{k}\in\bigcup_{i\in I}W_{i}, n\in\N\right\}.
\]
Thus any element $\sum_{i\in I}x_{i}\in\sum_{i\in I}W_{i}$ is a finite sum. We note that $\sum_{i\in I}W_{i}$ is the smallest wedge in $E$ that contains each $W_{i}$.

\begin{definition}\label{D:mws}
We call a pair $(E,\W)$ a \textit{multi-wedged space} if $E$ is a vector space and $\W$ is a nonempty collection of wedges in $E$.
\end{definition}

\begin{notation}
Let $(E,\W)$ be a multi-wedged space. Each $W\in\W$ induces a preordering on $E$. For $x,y\in E$, we write $x\leq_{W}y$ (or $y\geq_{W}x$) whenever $y-x\in W$. In addition, we write $x\sim_{W}y$ when $x\leq_{W}y$ and $y\leq_{W}x$, or equivalently, when $x-y\in\mathcal{D}(W)$. Given $A,B\subseteq E$, we write $A\leq_{W}B$ (respectively, $A\sim_{W}B$) if $a\leq_{W}b$ (respectively, $a\sim_{W}b$) for each $a\in A$ and $b\in B$. For $x\in E$ and $A\subseteq E$, the notation $x\leq_{W}A$ (respectively, $x\sim_{W}A$) means $\{x\}\leq_{W}A$ (respectively, $\{x\}\sim_{W}A$). Given an indexed collection $(W_{i})_{i\in I}$ in $\W$ and $x,y\in E$, we write $x\leq_{i}y$ instead of $x\leq_{W_{i}}y$.
\end{notation}

\begin{definition}\label{D:msup}
Let $(E,\W)$ be a multi-wedged space. A collection $(x_{i},W_{i})_{i\in I}$ in $E\times\W$ is called \textit{multi-bounded above} if there exists $u\in E$ such that $x_{i}\leq_{i}u$ for each $i\in I$. In this case, $u$ is called a \textit{multi-upper bound} of $(x_{i},W_{i})_{i\in I}$. The definitions of \textit{multi-bounded below} and \textit{multi-lower bound} are defined analogously. Furthermore, we say $z\in E$ is a \textit{multi-supremum} of $(x_{i},W_{i})_{i\in I}$ whenever
\begin{itemize}
\item[(1)] $z$ is a multi-upper bound of $(x_{i},W_{i})_{i\in I}$,
\item[(2)] $u\geq_{i} x_{i}$ for every $i\in I$ implies $u\geq_{i}z$ for each $i\in I$.
\end{itemize}
\textit{Multi-lower bounds} and \textit{multi-infima} are defined similarly.
\end{definition}

We denote the set of all multi-suprema, respectively multi-infima of $(x_{i},W_{i})_{i\in I}$ by
\[
\underset{i\in I}{\msup}(x_{i},W_{i}),\quad \text{respectively}\quad \underset{i\in I}{\minf}(x_{i},W_{i}).
\]
In the case that $W_{i}=W\ (i\in I)$ for some $W\in\W$, we will instead write $\underset{i\in I}{\msup}\{x_{i}\}$, or when convenient, $\msup\{x_{i}:i\in I\}$. In this case, it will always be clear from the context which wedge is used in the set of multi-suprema. The special case where $\underset{i\in I}{\msup}(x_{i},W_{i})=\{z\}$ for some $z\in E$ will naturally be of special interest. In this scenario, we say that the multi-supremum is \textit{proper} and simply write $\underset{i\in I}{\msup}(x_{i},W_{i})=z$.

\begin{remark}\label{R:setofmsup}
Given a multi-wedged space $(E,\W)$ and a collection $(x_{i},W_{i})_{i\in I}$ in $E\times\W$, it easily follows from the definition of multi-suprema that for any $z\in\underset{i\in I}{\msup}(x_{i},W_{i})$,
\begin{align*}
\underset{i\in I}{\msup}(x_{i},W_{i})&=\{w:w\sim_{i}z\ \text{for all}\ i\in I\}\\
&=z+\D\left(\bigcap_{i\in I}W_{i}\right).
\end{align*}
Hence one multi-supremum determines the entire set of multi-suprema. We also see that a nonempty $\underset{i\in I}{\msup}(x_{i},W_{i})$ is proper if and only if $\bigcap_{i\in I}W_{i}$ is a cone.
\end{remark}

Multi-suprema and multi-infima in multi-wedged spaces share some familiar properties with classical suprema and infima in ordered vector spaces. We record these useful properties in the following proposition, whose straightforward proof is left to the reader.

\begin{proposition}\label{P:ids}
	Let $(E,\W)$ be a multi-wedged space, and let $(x_{i},W_{i})_{i\in I}$ be a collection in $E\times\W$. The following hold.
	\begin{itemize}
		\item[(1)] $\underset{i\in I}{\minf}(x_{i},W_{i})=-\underset{i\in I}{\msup}(-x_{i},W_{i})$.
		\item[(2)] For every $y\in E$ we have $y+\underset{i\in I}{\msup}(x_{i},W_{i})=\underset{i\in I}{\msup}(y+x_{i},W_{i})$.
		\item[(3)] For every $\lambda>0$ we have $\lambda\,\underset{i\in I}{\msup}(x_{i},W_{i})=\underset{i\in I}{\msup}(\lambda x_{i},W_{i})$.
	\end{itemize}
\end{proposition}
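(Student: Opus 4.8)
The plan is to derive all three identities from a single observation: each right-hand side is obtained from the left-hand side by applying to the index data a bijection of $E$ that interacts cleanly with all the preorders $\leq_{i}$ at once. Fix $\lambda>0$ and $y\in E$ and let $T\colon E\to E$ be $Tw=\lambda w+y$. Since each $W_{i}$ is a wedge one has $\lambda W_{i}=W_{i}$ (the inclusion $\lambda W_{i}\subseteq W_{i}$ is part of the definition, and $W_{i}=\lambda(\lambda^{-1}W_{i})\subseteq\lambda W_{i}$ gives the reverse), so for $x,w\in E$ we get $x\leq_{i}w$ iff $Tw-Tx=\lambda(w-x)\in W_{i}$ iff $Tx\leq_{i}Tw$; that is, $T$ is simultaneously an order isomorphism for every $\leq_{i}$. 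For part~(1) I would instead use the bijection $S\colon w\mapsto-w$, which simultaneously reverses each $\leq_{i}$, since $Sx_{i}-Sw=w-x_{i}$, so that $x_{i}\leq_{i}w$ iff $Sw\leq_{i}Sx_{i}$; note in particular that the reversed comparison is governed by the \emph{same} wedge $W_{i}$, so no sign change on the wedges is required.

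First I would record the effect of these maps on multi-upper bounds. If $T$ is an order isomorphism for all $\leq_{i}$, then from $x_{i}\leq_{i}u\iff Tx_{i}\leq_{i}Tu$ and the bijectivity of $T$ it follows that $u$ is a multi-upper bound of $(x_{i},W_{i})_{i\in I}$ if and only if $Tu$ is a multi-upper bound of $(Tx_{i},W_{i})_{i\in I}$; similarly, $Sw$ ranges bijectively over $E$ as $w$ does, so $u$ is a multi-upper bound of $(x_{i},W_{i})_{i\in I}$ if and only if $Su$ is a multi-lower bound of $(Sx_{i},W_{i})_{i\in I}$. Next I would push this through condition~(2) of Definition~\ref{D:msup}: for the order isomorphism $T$, the implication ``$u\geq_{i}x_{i}$ for all $i$ $\Rightarrow$ $u\geq_{i}z$ for all $i$'' is equivalent to ``$Tu\geq_{i}Tx_{i}$ for all $i$ $\Rightarrow$ $Tu\geq_{i}Tz$ for all $i$,'' because $T$ is a bijection and the hypothesis quantifies over all candidate multi-upper bounds. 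Combining the two observations yields $z$ is a multi-supremum of $(x_{i},W_{i})_{i\in I}$ iff $Tz$ is a multi-supremum of $(Tx_{i},W_{i})_{i\in I}$, and likewise that the order-reversing $S$ converts the maximality condition defining multi-suprema of $(Sx_{i},W_{i})_{i\in I}$ into the minimality condition defining multi-infima of $(x_{i},W_{i})_{i\in I}$.

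Applying the $T$-statement with $Tw=y+w$ gives $\underset{i\in I}{\msup}(y+x_{i},W_{i})=y+\underset{i\in I}{\msup}(x_{i},W_{i})$, which is~(2); applying it with $Tw=\lambda w$ gives $\underset{i\in I}{\msup}(\lambda x_{i},W_{i})=\lambda\,\underset{i\in I}{\msup}(x_{i},W_{i})$, which is~(3); and the $S$-statement gives $z\in\underset{i\in I}{\minf}(x_{i},W_{i})$ iff $-z\in\underset{i\in I}{\msup}(-x_{i},W_{i})$, i.e. $\underset{i\in I}{\minf}(x_{i},W_{i})=-\underset{i\in I}{\msup}(-x_{i},W_{i})$, which is~(1).

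I do not expect a genuine obstacle; the statement is elementary. The only points that want a little care are the quantifier bookkeeping in condition~(2) — one must exploit that the hypothesis ranges over \emph{all} multi-upper bounds, so that transporting it along $T$ (or $S$) is a true equivalence and not merely a one-way implication — and, for~(1), the verification that the multi-infimum comparison $z\leq_{i}x_{i}$ and the multi-supremum comparison $-x_{i}\leq_{i}-z$ involve the identical wedge $W_{i}$, which is what makes the substitution $z\mapsto-z$ legitimate without altering $\W$.
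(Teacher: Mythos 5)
Your proof is correct, and since the paper explicitly leaves this proposition's ``straightforward proof to the reader,'' there is no official argument to diverge from: your verification that the affine bijections $w\mapsto\lambda w+y$ preserve, and $w\mapsto -w$ reverses, every preorder $\leq_{i}$ simultaneously (using $\lambda W_{i}=W_{i}$ for $\lambda>0$ and that negation keeps the same wedge $W_{i}$), and that both conditions in Definition~\ref{D:msup} transport along such bijections because the second condition quantifies over all of $E$, is exactly the intended elementary check. Nothing is missing.
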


As hinted at in parts (2) and (3)  above, we mostly focus on multi-suprema in this paper, since (1) of the previous proposition can be used to transfer the results of this paper to analogous results for multi-infima.

For an ordered vector space $(E,K)$ and a collection $(x_{i})_{i\in I}$ in $E$, there is a convenient geometrical interpretation of $\underset{i\in I}{\sup}\{x_{i}\}$ (as mentioned in the introduction). Indeed, $\underset{i\in I}{\sup}\{x_{i}\}$ exists in $E$ if and only if there exists $z\in E$ such that $\bigcap_{i\in I}(x_{i}+K)=z+K$, and in this case, $z=\underset{i\in I}{\sup}\{x_{i}\}$ (see \cite[Theorem 1.16]{AT}). The next proposition shows that there exists a useful geometric interpretation for multi-suprema as well. This geometric interpretation will prove convenient for determining whether or not a multi-wedged space is a multi-lattice (see Example~\ref{E:not_n+1}).

\begin{proposition}\label{P:geoint} Let $(E,\W)$ be a multi-wedged space, and let $(x_{i},W_{i})_{i\in I}$ be a collection in $E\times\W$. For $z\in E$, we have $z\in\underset{i\in I}{\msup}(x_{i},W_{i})$ if and only if
\[
\bigcap_{i\in I}(x_{i}+W_{i})=z+\bigcap_{i\in I}W_{i}.
\]
\end{proposition}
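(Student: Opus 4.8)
The plan is to prove the two directions separately, translating the order-theoretic conditions (1) and (2) of Definition~\ref{D:msup} into set-theoretic statements about the translated wedges $x_i+W_i$. The key observation is that the condition ``$u \geq_i x_i$ for every $i \in I$'' is exactly the statement $u \in \bigcap_{i\in I}(x_i+W_i)$, and similarly ``$u \geq_i z$ for every $i$'' means $u \in z + \bigcap_{i\in I}W_i$. So the whole proposition is really just a careful unwinding of definitions, combined with one genuinely useful fact: for any $z \in E$, the set of elements $v$ with $v \geq_i z$ for all $i$ equals $z + \bigcap_{i\in I}W_i$ (since $v - z \in W_i$ for all $i$ iff $v - z \in \bigcap_i W_i$).

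For the forward direction, suppose $z \in \underset{i\in I}{\msup}(x_i,W_i)$. Condition (1) says $z \geq_i x_i$ for all $i$, which gives $z + W_i \subseteq x_i + W_i$ for each $i$ (translate the containment $W_i \subseteq W_i$ by $z - x_i \in W_i$ — wait, more carefully: $z - x_i \in W_i$ implies $z + W_i \subseteq x_i + W_i$ since $W_i + W_i \subseteq W_i$), hence $z + \bigcap_i W_i \subseteq \bigcap_i(x_i + W_i)$. For the reverse inclusion, take any $u \in \bigcap_i(x_i+W_i)$; then $u \geq_i x_i$ for all $i$, so by condition (2) $u \geq_i z$ for all $i$, i.e.\ $u - z \in \bigcap_i W_i$, i.e.\ $u \in z + \bigcap_i W_i$. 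This establishes the displayed equality.

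For the converse, suppose $\bigcap_i(x_i+W_i) = z + \bigcap_i W_i$. Since $0 \in \bigcap_i W_i$, we get $z \in \bigcap_i(x_i + W_i)$, which is exactly condition (1): $z \geq_i x_i$ for all $i$. For condition (2), suppose $u \geq_i x_i$ for all $i$; then $u \in \bigcap_i(x_i+W_i) = z + \bigcap_i W_i$, so $u - z \in \bigcap_i W_i$, giving $u - z \in W_i$ hence $u \geq_i z$ for each $i$. Thus $z$ is a multi-supremum.

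I do not expect any real obstacle here — the proposition is essentially a bookkeeping exercise once one spots that the ``set of common upper bounds'' of $(x_i,W_i)_{i\in I}$ is precisely $\bigcap_i(x_i+W_i)$ and that the ``set of common upper bounds of $z$'' with respect to all the $W_i$ is $z + \bigcap_i W_i$. The only point requiring a line of care is the inclusion $z \geq_i x_i \Rightarrow z + \bigcap_i W_i \subseteq \bigcap_i(x_i+W_i)$ in the forward direction, which uses that each $W_i$ is a wedge (closed under addition); everything else is immediate from the definitions.
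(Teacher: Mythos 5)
Your proof is correct and follows essentially the same route as the paper's: translate condition (1) into the inclusion $z+\bigcap_{i}W_{i}\subseteq\bigcap_{i}(x_{i}+W_{i})$ using that each $W_{i}$ is a wedge, use condition (2) for the reverse inclusion, and reverse these steps for the converse (your appeal to $0\in\bigcap_{i}W_{i}$ is the same fact the paper uses implicitly when writing $z\in z+\bigcap_{i}W_{i}$). No gaps.
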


\begin{proof}
First assume that $z\in\underset{i\in I}{\msup}(x_{i},W_{i})$. Condition (1) in Definition~\ref{D:msup} implies that for any given $i\in I$ we have $z-x_{i}\in W_{i}$ and thus $z+W_{i}\subseteq x_{i}+W_{i}$. Hence
\[
z+\bigcap_{i\in I}W_{i}=\bigcap_{i\in I}(z+W_{i})\subseteq\bigcap_{i\in I}(x_{i}+W_{i}).
\]
On the other hand, suppose that $u\in\bigcap_{i\in I}(x_{i}+W_{i})$. Then we have $u\geq_{i}x_{i}$ for each $i\in I$. It follows from condition (2) of Definition~\ref{D:msup} that $u\geq_{i}z$ for each $i\in I$. We thus have $u\in z+\bigcap_{i\in I}W_{i}$. Hence
\[
\bigcap_{i\in I}(x_{i}+W_{i})=z+\bigcap_{i\in I}W_{i}.
\]
	
Conversely, suppose that $\bigcap_{i\in I}(x_{i}+W_{i})=z+\bigcap_{i\in I}W_{i}$. Let $i\in I$. Then
\[
z\in z+\bigcap_{i\in I}W_{i}=\bigcap_{i\in I}(x_{i}+W_{i})\subseteq x_{i}+W_{i},
\]
and thus $z\geq_{i}x_{i}$. Suppose next that $u\geq_{i}x_{i}$ for each $i\in I$. It follows that
\[
u\in\bigcap_{i\in I}(x_{i}+W_{i})=z+\bigcap_{i\in I}W_{i},
\]
which implies $u\geq_{i}z$ for all $i\in I$. Therefore, $z\in\underset{i\in I}{\msup}(x_{i},W_{i})$.
\end{proof}

\begin{definition}\label{D:ml} (1) For a cardinal number $\kappa$, we call a multi-wedged space $(E,\W)$ a $\kappa$-\textit{multi-lattice} if for every multi-bounded above collection $(x_{i},W_{i})_{i\in I}$ in $E\times\W$ with $|I|<\kappa+1$ we have $\underset{i\in I}{\msup}(x_{i},W_{i})\neq\varnothing$. 
\begin{itemize}
\item[(2)] A \textit{Dedekind complete multi-lattice} is a multi-wedged space that is a $\kappa$-multi-lattice for every cardinal number $\kappa$.
\item[(3)] For brevity, we refer to a $2$-multi-lattice as a \textit{multi-lattice}.
\end{itemize}
In particular, an \textit{$\aleph_{0}$-multi-lattice} refers to a multi-wedged space that is an $n$-multi-lattice for every $n\in\N$.
\end{definition}

\begin{remark}\label{R:induction}
A standard induction argument proves the following. If $(E,\W)$ is a multi-lattice and $\W$ is closed under finite intersections then $(E,\W)$ is an $\aleph_{0}$-multi-lattice.
\end{remark}

Given $n\in\N$, it is not true however, that every $n$-multi-lattice is an $(n+1)$-multi-lattice, as the following example reveals.

\begin{example}\label{E:not_n+1}
Let $E=\R^{2}$, let $W_{1}=\{(x,y)\in\R^{2}:x\geq 0\}$, $W_{2}=\{(x,y)\in\R^{2}:y\geq 0\}$, and $W_{3}=\{(x,y)\in\R^{2}:y\geq-x\}$. Then $(E,\{ W_{1},W_{2},W_{3}\})$ is a multi-lattice that is not a $3$-multi-lattice.
\end{example}

Indeed, it is readily checked using Proposition~\ref{P:geoint} that $(E,\{ W_{1},W_{2},W_{3}\})$ is a multi-lattice. However, $W_{1}\cap W_{2}\cap\big((1,1)+W_{3}\big)$, which constitutes the shaded region in Figure 1 below, is not a translated wedge. Another appeal to Proposition~\ref{P:geoint} verifies our claim.

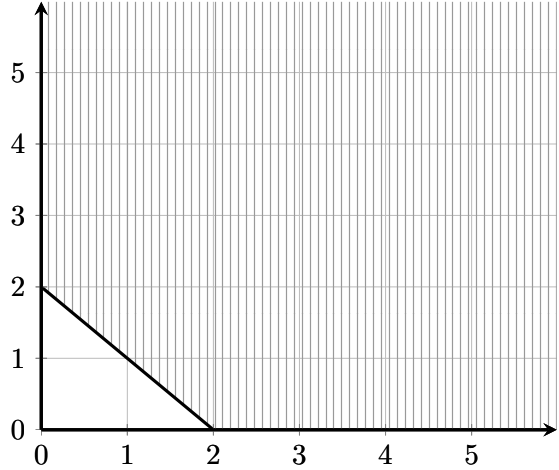
\begin{figure}[h!] 
	\centering
	\label{fig:p3:c1} 
	\begin{tikzpicture} 
	\begin{axis}[only axis on top,
	axis line style=very thick, 
	axis x line=bottom, 
	axis y line=left, 
	ymin=0,ymax=5.99,xmin=0,xmax=5.99, 
grid=major 
	] 
	\addplot [draw=none, pattern=vertical lines, pattern color=black!40, domain=-10:4]
	{-x+2} \drawge; 
	\addplot[very thick, domain=-10:10] {-x+2}; 
	\end{axis} 
	\end{tikzpicture} 
	\caption{$W_{1}\cap W_{2}\cap\left((1,1)+W_{3}\right)$} 
\end{figure} 

\section{Riesz-Kantorovich Formulas}\label{S:ops}

Operators between multi-wedged spaces and multi-lattices are investigated in this section, obtaining Riesz-Kantorovich formulas in the multi-wedged setting [Theorem~\ref{T:main}]. We begin with a notion of positive operators between multi-wedged spaces.

\begin{definition}\label{D:pos}
Let $(E,\W)$ and $(F,\V)$ be multi-wedged spaces. For $W\in\W $ and $V\in\V$, we say that a map $T\colon E\rightarrow F$ is $(W,V)$-\textit{positive} if $T(W)\subseteq V$. We denote by $\L_{W,V}(E,F)$ the collection of all $(W,V)$-positive operators $T\colon E\rightarrow F$. Also, we set
\[
\L_{\W,\V}(E,F)=\{\L_{W,V}(E,F):W\in\W, V\in\V\}.
\]
In the case that $\V=\{V\}$ for some wedge $V$ in $F$, we write $\L_{\W,V}(E,F)$ instead of $\L_{\W,\{V\}}(E,F)$, for short.
\end{definition}

The following proposition is needed for our main result, Theorem~\ref{T:main}. We call a wedge $W$ in a vector space $E$ \textit{generating} if $E=W-W$.

\begin{proposition}\label{P:wedgeofops}
If $(E,\W)$ and $(F,\V)$ are multi-wedged spaces then $\L_{W,V}(E,F)$ is a wedge in $\L(E,F)$ for each $W\in\W$ and every $V\in\V$. Thus $\L_{\W,\V}(E,F)$ is a collection of wedges in $\L(E,F)$. Moreover, given collections $(W_{i})_{i\in I}$ and $(V_{j})_{j\in J}$ in $\W$ and $\V$, respectively,
\[
\mathcal{D}\left(\bigcap_{i\in I,j\in J}\mathcal{L}_{W_{i},V_{j}}(E,F)\right)=\left\{T\in\mathcal{L}(E,F):T\left(\sum_{i\in I}W_{i}\right)\subseteq\mathcal{D}\left(\bigcap_{j\in J}V_{j}\right)\right\}.\tag{$\ast$}
\]
In particular, for $E,F\neq\{0\}$ we have that $\bigcap_{i\in I,j\in J}\mathcal{L}_{W_{i},V_{j}}(E,F)$ is a cone if and only if $\sum_{i\in I}W_{i}$ is generating and $\bigcap_{j\in J}V_{j}$ is a cone.
\end{proposition}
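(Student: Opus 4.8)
The plan is to take the three assertions in order, with essentially all of the substance concentrated in the identity $(\ast)$. That $\L_{W,V}(E,F)$ is a wedge I would dispatch first and quickly: it contains the zero operator since $0\in V$, and if $T,S\in\L_{W,V}(E,F)$ and $\lambda\geq 0$, then $(T+S)(W)\subseteq T(W)+S(W)\subseteq V+V\subseteq V$ and $(\lambda T)(W)=\lambda T(W)\subseteq\lambda V\subseteq V$, using only that $V$ is a wedge. Hence $\L_{\W,\V}(E,F)$ is a collection of wedges in $\L(E,F)$, and in particular $\bigcap_{i\in I,j\in J}\L_{W_i,V_j}(E,F)$ is a wedge, so that its $\mathcal{D}$ makes sense.

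For $(\ast)$, I would unwind the definition of $\mathcal{D}$: an operator $T$ lies in $\mathcal{D}\big(\bigcap_{i\in I,j\in J}\L_{W_i,V_j}(E,F)\big)$ precisely when both $T$ and $-T$ are $(W_i,V_j)$-positive for all $i\in I$ and $j\in J$, i.e.\ when $T(W_i)\subseteq V_j$ and $T(W_i)\subseteq -V_j$ for all $i,j$, equivalently $T(W_i)\subseteq\mathcal{D}(V_j)$ for all $i,j$. Quantifying over $i$ and $j$, this says $T\big(\bigcup_{i\in I}W_i\big)\subseteq\bigcap_{j\in J}\mathcal{D}(V_j)$, and the routine set identity $\bigcap_{j\in J}\mathcal{D}(V_j)=\mathcal{D}\big(\bigcap_{j\in J}V_j\big)$ rewrites the right-hand side. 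It then remains to upgrade the containment on $\bigcup_{i\in I}W_i$ to one on $\sum_{i\in I}W_i$: the reverse inclusion is free since $\bigcup_{i\in I}W_i\subseteq\sum_{i\in I}W_i$, and for the forward one I would use that $\mathcal{D}\big(\bigcap_{j\in J}V_j\big)$ is a linear subspace (the largest subspace contained in the wedge $\bigcap_{j\in J}V_j$), so that if $T$ sends each generator in $\bigcup_{i\in I}W_i$ into this subspace, then by linearity and closure of a subspace under finite sums it sends every finite sum of such generators — that is, every element of $\sum_{i\in I}W_i$ — into it as well. This establishes $(\ast)$.

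For the final characterization, I would use that a wedge is a cone exactly when its $\mathcal{D}$ equals $\{0\}$, and read off from $(\ast)$ that $\bigcap_{i\in I,j\in J}\L_{W_i,V_j}(E,F)$ is a cone iff the only $T$ with $T\big(\sum_{i\in I}W_i\big)\subseteq\mathcal{D}\big(\bigcap_{j\in J}V_j\big)$ is $T=0$. If $\sum_{i\in I}W_i$ is generating and $\bigcap_{j\in J}V_j$ is a cone, then $\mathcal{D}\big(\bigcap_{j\in J}V_j\big)=\{0\}$, so such a $T$ vanishes on $\sum_{i\in I}W_i$ and hence on $E=\sum_{i\in I}W_i-\sum_{i\in I}W_i$, forcing $T=0$. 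For the converse I would argue contrapositively, and this is where $E,F\neq\{0\}$ is genuinely used: if $\bigcap_{j\in J}V_j$ is not a cone, pick $0\neq v\in\mathcal{D}\big(\bigcap_{j\in J}V_j\big)$ and a nonzero linear functional $\varphi$ on $E$; then $x\mapsto\varphi(x)v$ is a nonzero operator whose range lies in the subspace $\mathcal{D}\big(\bigcap_{j\in J}V_j\big)$, hence in $\mathcal{D}\big(\bigcap_{i\in I,j\in J}\L_{W_i,V_j}(E,F)\big)$. If instead $\bigcap_{j\in J}V_j$ is a cone but $\sum_{i\in I}W_i$ is not generating, then $G:=\sum_{i\in I}W_i-\sum_{i\in I}W_i$ is a proper subspace of $E$; extending a basis of $G$ to a basis of $E$ produces a nonzero linear functional $\psi$ vanishing on $G$, and for any $0\neq w\in F$ the operator $x\mapsto\psi(x)w$ is a nonzero element of $\mathcal{D}\big(\bigcap_{i\in I,j\in J}\L_{W_i,V_j}(E,F)\big)$. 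No step presents a serious obstacle; the one point deserving care is the repeated exploitation of the fact that $\mathcal{D}\big(\bigcap_{j\in J}V_j\big)$ is a subspace — once to pass from $\bigcup_{i\in I}W_i$ to $\sum_{i\in I}W_i$ in $(\ast)$, and again to verify that the rank-one operators built above actually land inside it.
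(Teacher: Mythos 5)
Your proposal is correct and follows essentially the same route as the paper: verify the wedge property directly, establish $(\ast)$ by unwinding $\mathcal{D}$ and using that $\mathcal{D}\bigl(\bigcap_{j\in J}V_{j}\bigr)$ is a subspace, and settle the cone characterization via $(\ast)$ together with rank-one (or complement-supported) witnesses in the non-generating and non-cone cases. Your explicit remark on passing from $\bigcup_{i\in I}W_{i}$ to $\sum_{i\in I}W_{i}$ via the subspace property is exactly the point the paper handles with the phrase ``since $T$ is linear,'' so there is no gap.
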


\begin{proof}
Let $(E,\W)$ and $(F,\V)$ be multi-wedged spaces. It is readily checked that $\L_{W,V}(E,F)$ is a wedge in $\L(E,F)$ for every $W\in\W$ and all $V\in\V$. Let $(W_{i})_{i\in I}$ and $(V_{j})_{j\in J}$ be collections in $\W$ and $\V$, respectively. Assume $T\in\mathcal{D}\left(\bigcap_{i\in I,j\in J}\mathcal{L}_{W_{i},V_{j}}(E,F)\right)$, and let $i\in I$. Suppose $x\in W_{i}$. Since $\pm T\in\mathcal{L}_{W_{i},V_{j}}(E,F)$ for every $j\in J$, we have $T(x)\in\mathcal{D}\left(\bigcap_{j\in J}V_{j}\right)$. Since $i\in I$ was chosen arbitrarily and $T$ is linear, we have $T(\sum_{i\in I}W_{i})\subseteq\mathcal{D}\left(\bigcap_{j\in J}V_{j}\right)$.

On the other hand, assume that $T\in\mathcal{L}(E,F)$ satisfies $T(\sum_{i\in I}W_{i})\subseteq\mathcal{D}\left(\bigcap_{j\in J}V_{j}\right)$. Let $i\in I,j\in J$, and let $x\in W_{i}$. By assumption, $\pm T(x)\in\bigcap_{j\in J}V_{j}\subseteq V_{j}$. Thus we have $\pm T\in\L_{W_{i},V_{j}}(E,F)$ for every $i\in I,j\in J$. Hence $T\in\mathcal{D}\left(\bigcap_{i\in I,j\in J}\mathcal{L}_{W_{i},V_{j}}(E,F)\right)$. This proves $(\ast)$.

Next assume $\sum_{i\in I}W_{i}$ is generating and $\bigcap_{j\in J}V_{j}$ is a cone. Then the only $T\in\mathcal{L}(E,F)$ such that $T\left(\sum_{i\in I}W_{i}\right)\subseteq\mathcal{D}(\bigcap_{j\in J}V_{j})$ is the zero map. Therefore, from $(\ast)$, we have that $\bigcap_{i\in I,j\in J}\mathcal{L}_{W_{i},V_{j}}(E,F)$ is a cone.

Suppose $E,F\neq\{0\}$ and that $\sum_{i\in I}W_{i}-\sum_{i\in I}W_{i}\neq E$. Let $U$ be an algebraic complement of $\sum_{i\in I}W_{i}-\sum_{i\in I}W_{i}$, and let $S\colon U\rightarrow F$ be a nonzero linear map. Next define $T\colon\left(\sum_{i\in I}W_{i}-\sum_{i\in I}W_{i}\right)\oplus U\rightarrow F$ by $T(w\oplus u)=S(u)$. Then $T$ is a nonzero linear map satisfying $T(\sum_{i\in I}W_{i})\subseteq\mathcal{D}(\bigcap_{j\in J}V_{j})$. By $(\ast)$, $\bigcap_{i\in I,j\in J}\mathcal{L}_{W_{i},V_{j}}(E,F)$ is not a cone. Finally, assume $\bigcap_{j\in J}V_{j}$ is not a cone. Let $y\in\mathcal{D}\left(\bigcap_{j\in J}V_{j}\right)$ be nonzero. Let $\phi$ be a nonzero linear functional on $E$ and define $T(x)=\phi(x)y\ (x\in E)$. Let $i\in I$ and $j\in J$ be arbitrary, and let $x\in W_{i}$. Then $\pm T(x)=\pm\phi(x)y\in\bigcap_{j\in J}V_{j}\subseteq V_{j}$. It follows that $\pm T$ are nonzero members of $\bigcap_{i\in I,j\in J}\mathcal{L}_{W_{i},V_{j}}(E,F)$.
\end{proof}

As a result of the proposition above, we obtain the following corollary regarding multi-suprema.

\begin{corollary}\label{C:pmsupofops}
Let $(E,\W)$ and $(F,\V)$ be multi-wedged spaces with $E,F\neq\{0\}$, and let $\bigl(T_{i},\L_{W_{i},V_{i}}(E,F)\bigr)_{i\in I}$ be a collection in $\L(E,F)\times\L_{\W,\V}(E,F)$ satisfying
\[
\underset{i\in I}{\msup}\bigl(T_{i},\L_{W_{i},V_{i}}(E,F)\bigr)\neq\varnothing.
\]
We have that $\underset{i\in I}{\msup}\bigl(T_{i},\L_{W_{i},V_{i}}(E,F)\bigr)$ is proper if and only if $\sum_{i\in I}W_{i}$ is generating and $\bigcap_{i\in I}V_{i}$ is a cone.
\end{corollary}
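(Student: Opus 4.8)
The plan is to combine the structural description of multi-suprema in Remark~\ref{R:setofmsup} with the cone criterion of Proposition~\ref{P:wedgeofops}. Since the multi-supremum is assumed nonempty, Remark~\ref{R:setofmsup} tells us it is proper if and only if $\bigcap_{i\in I}\L_{W_{i},V_{i}}(E,F)$ is a cone, so the statement reduces to showing that this wedge is a cone exactly when $\sum_{i\in I}W_{i}$ is generating and $\bigcap_{i\in I}V_{i}$ is a cone.

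The ``only if'' direction I would dispatch quickly: the wedge $\bigcap_{i\in I}\L_{W_{i},V_{i}}(E,F)$ contains $\bigcap_{i\in I,\,j\in I}\L_{W_{i},V_{j}}(E,F)$, which imposes more membership conditions, and any wedge contained in a cone is a cone since $W\subseteq K$ forces $\D(W)\subseteq\D(K)$. Hence if $\bigcap_{i\in I}\L_{W_{i},V_{i}}(E,F)$ is a cone, so is $\bigcap_{i\in I,\,j\in I}\L_{W_{i},V_{j}}(E,F)$, and the ``in particular'' clause of Proposition~\ref{P:wedgeofops}, applied with both of its index sets taken equal to $I$ and using $E,F\neq\{0\}$, yields that $\sum_{i\in I}W_{i}$ is generating and $\bigcap_{i\in I}V_{i}$ is a cone.

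For the ``if'' direction I would first record, by a computation identical to the one establishing $(\ast)$ in Proposition~\ref{P:wedgeofops} but now without a second index set, that $\D\bigl(\bigcap_{i\in I}\L_{W_{i},V_{i}}(E,F)\bigr)=\{T\in\L(E,F):T(W_{i})\subseteq\D(V_{i})\ \text{for all}\ i\in I\}$; I would then try to show that, when $\sum_{i\in I}W_{i}$ is generating and $\bigcap_{i\in I}V_{i}$ is a cone, the only such $T$ is $0$, which by Remark~\ref{R:setofmsup} forces propriety. The main obstacle lies here. The intersection $\bigcap_{i\in I}\L_{W_{i},V_{i}}(E,F)$ runs over the ``diagonal'' pairs $(W_{i},V_{i})$ and is in general strictly larger than the ``rectangular'' intersection $\bigcap_{i,j\in I}\L_{W_{i},V_{j}}(E,F)$ to which formula $(\ast)$ applies, so one cannot simply quote $(\ast)$ to pass from ``$T(W_{i})\subseteq\D(V_{i})$ for each $i$'' to ``$T\bigl(\sum_{i\in I}W_{i}\bigr)\subseteq\D\bigl(\bigcap_{i\in I}V_{i}\bigr)$''; completing the argument will require exploiting the specific shape of the wedges $\L_{W_{i},V_{i}}(E,F)$ more carefully than $(\ast)$ alone allows.
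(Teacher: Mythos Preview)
Your reduction via Remark~\ref{R:setofmsup} to the question of whether $\bigcap_{i\in I}\L_{W_{i},V_{i}}(E,F)$ is a cone, and your argument for the ``only if'' direction, are correct and match the paper's intended route: the paper offers no proof beyond declaring the statement a consequence of Proposition~\ref{P:wedgeofops}.

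Your unease about the ``if'' direction is well founded, and in fact the obstacle you identify is fatal rather than merely technical: the corollary as stated is false. Take $E=F=\R^{2}$, let $W_{1}=V_{2}$ be the $x$-axis and $W_{2}=V_{1}$ be the $y$-axis (each a subspace, hence a wedge). Then $\sum_{i}W_{i}=\R^{2}$ is generating and $\bigcap_{i}V_{i}=\{0\}$ is a cone. Writing $T=\bigl(\begin{smallmatrix}a&b\\c&d\end{smallmatrix}\bigr)$, one computes $T\in\L_{W_{1},V_{1}}(E,F)$ iff $a=0$ and $T\in\L_{W_{2},V_{2}}(E,F)$ iff $d=0$, so the diagonal intersection $\bigcap_{i}\L_{W_{i},V_{i}}(E,F)=\{T:a=d=0\}$ is a two-dimensional subspace, not a cone. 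With $T_{1}=T_{2}=0$ the multi-supremum equals this whole subspace and is therefore nonempty but not proper. So no amount of ``exploiting the specific shape of the wedges'' will close the gap; the statement needs an additional hypothesis.

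The paper never uses the corollary in this generality: its only application is inside Theorem~\ref{T:main}, where all $V_{i}$ equal a single wedge $V$. In that situation the diagonal intersection $\bigcap_{i}\L_{W_{i},V}(E,F)$ coincides with the rectangular one of Proposition~\ref{P:wedgeofops} (the second index set being a singleton), and then both directions follow immediately from that proposition together with Remark~\ref{R:setofmsup}. Your outline would complete at once under this extra hypothesis.
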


The following lemma and proof are analogous to the Riesz-Kantorovich Extension Theorem for positive additive maps on a cone (see e.g.\ \cite[Lemma 1.26]{AT}) but involve wedges instead of cones, make no reference to positive maps, require the maps in question to be positively homogeneous, and hold for general vector spaces in the codomain.

\begin{lemma}\label{L:extn}
Suppose that $(E,W)$ is a pre-ordered vector space and that $V$ is a vector space. If $T\colon W\rightarrow V$ is an additive, positively homogeneous mapping then $T$ extends to an operator $T\colon E\rightarrow V$. Moreover, if $W$ is generating then $T$ extends uniquely to all of $E$.
\end{lemma}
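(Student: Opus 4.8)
The plan is to define the extension by the only formula compatible with linearity and then to check that it is well defined. For $x\in W-W$, write $x=u-v$ with $u,v\in W$ and set $T(x):=T(u)-T(v)$ (the subtraction makes sense since $V$ is a vector space). To see that this does not depend on the chosen representation, suppose $u-v=u'-v'$ with $u,v,u',v'\in W$; then $u+v'=u'+v$ is an identity within $W$, so additivity of $T$ on $W$ yields $T(u)+T(v')=T(u')+T(v)$, i.e. $T(u)-T(v)=T(u')-T(v')$. Hence $T$ is well defined on the subspace $W-W$ of $E$, and it agrees with the original $T$ on $W$ (take $v=0$, noting $T(0)=T(0)+T(0)$ forces $T(0)=0$).

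Next I would verify linearity of this map on $W-W$. Additivity follows immediately: if $x=u-v$ and $y=u''-v''$ with $u,v,u'',v''\in W$, then $x+y=(u+u'')-(v+v'')$ with $u+u'',v+v''\in W$, so $T(x+y)=T(x)+T(y)$ by additivity of $T$ on $W$. For scalar multiplication, positive homogeneity of $T$ on $W$ handles $\lambda\ge 0$ directly, while for $\lambda<0$ one writes $\lambda(u-v)=(-\lambda)(v-u)$ with $-\lambda>0$ and applies positive homogeneity again. Thus $T$ restricts to a linear operator on the subspace $W-W$ of $E$.

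Finally, any linear map defined on a subspace of $E$ extends to a linear operator on all of $E$: fix an algebraic complement $U$ of $W-W$ in $E$ and extend, say, by zero on $U$, obtaining the desired operator $T\colon E\rightarrow V$. If $W$ is generating then $W-W=E$, so the formula $T(u-v)=T(u)-T(v)$ already defines $T$ on all of $E$; since this formula is forced on any linear extension of the given $T$ on $W$, the extension is unique. The argument is routine, and the only step demanding attention is the well-definedness check, where the additivity hypothesis is precisely what is needed; the passage to an algebraic complement in the non-generating case introduces nothing beyond the standard existence of such complements.
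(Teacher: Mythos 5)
Your proof is correct and follows essentially the same route as the paper: define the extension on $W-W$ by $T(u-v)=T(u)-T(v)$, verify well-definedness and additivity, establish homogeneity (you treat $\lambda<0$ directly via $\lambda(u-v)=(-\lambda)(v-u)$, where the paper passes through $\Q$-linearity), and extend to $E$ via an algebraic complement, with uniqueness immediate when $W$ is generating. You also spell out the well-definedness check that the paper delegates to its citation of the Riesz--Kantorovich extension lemma, which is fine.
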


\begin{proof}
Let $T\colon W\rightarrow V$ be an additive, positively homogeneous mapping. Set $X=W-W$. For each $x\in X$, let $x_{1},x_{2}\in W$ be such that $x=x_{1}-x_{2}$, and define
\[
S(x)=T(x_{1})-T(x_{2}).
\]
Following the proof of \cite[Lemma 1.26]{AT}, one readily proves that $S$ is a well-defined, additive map on $X$ that extends $T$. It follows that $S$ is $\Q$-linear on $X$. We show that $S$ is also homogeneous on $X$. For this task, let $x=x_{1}-x_{2}$, with $x_{1},x_{2}\in W$, and let $\lambda\in\R$. If $\lambda\geq 0$ then $\lambda x_{1},\lambda x_{2}\in W$ and
\[
S(\lambda x)=S(\lambda x_{1}-\lambda x_{2})=T(\lambda x_{1})-T(\lambda x_{2})=\lambda T(x_{1})-\lambda T(x_{2})=\lambda S(x).
\]
Hence $S$ is positively homogeneous. For $\lambda<0$, it follows from the $\Q$-linearity and positive homogeneity of $S$ that
\[
S(\lambda x)=-S(-\lambda x)=-(-\lambda)S(x)=\lambda S(x).
\]
Thus $S$ is linear on $X$. Next let $Y$ be an algebraic complement of $X$ in $E$. Let $L\colon Y\rightarrow V$ be an operator, and define $R\colon E\rightarrow V$ by $R(x\oplus y)=S(x)+L(y)\ (x\in X,y\in Y)$. Then $R$ is a linear map on $E$ that extends $T$.

Finally, it is readily checked that $T$ extends uniquely to $E$ in the case that $W$ is generating.
\end{proof}

As well known, the Riesz decomposition property is employed in the proof of the classical Riesz-Kantorovich formulas \cite[Theorem~1.59]{AT}. Likewise, we will make use of a suitable Riesz decomposition property for multi-wedged spaces.

\begin{definition}\label{D:RDP}
Given $\alpha,\beta\in\N\cup\{\aleph_{0}\}$, we say that a multi-wedged space $(E,\W)$ has the $(\alpha,\beta)$-\textit{Riesz decomposition property} if for any $m<\alpha+1,n<\beta+1$, any collection $(W_{j})_{j\in J}$ of wedges in $\W$ with $|J|=n$, and any two collections $(x_{i})_{i\in I}$ and $(y_{j})_{j\in J}$ of vectors in $E$ such that $|I|=m$, $x_{i}\in\sum_{j\in J}W_{j}\ (i\in I)$, $y_{j}\in W_{j}\ (j\in J)$, and
\[
\sum_{i\in I}x_{i}=\sum_{j\in J}y_{i},
\]
there exist $z_{ij}\in W_{j}\ (i\in I,j\in J)$ such that
\[
x_{i}=\sum\limits_{j\in J}z_{ij}\ (i\in I)\ \text{and}\ y_{j}=\sum\limits_{i\in I}z_{ij}\ (j\in J).
\]
\end{definition}

It is known that every vector lattice has the classical Riesz decomposition property (see e.g.\ \cite[Theorem 1.20]{AB}), which coincides with the $(\aleph_{0},\aleph_{0})$-Riesz decomposition property on a vector lattice \cite[Theorem~1.54]{AT}. However, not every multi-lattice exhibits even the $(2,2)$-Riesz decomposition property, as the following simple example illustrates.

\begin{example}\label{E:noRDP}
Let $(E,\W)$ be the Dedekind complete multi-lattice with $E=\R^{2}$ and $\W=\{W_{1},W_{2}\}$, where $W_{1}=\{(x,y)\in\R^{2}:x,y\geq 0\}$ and $W_{2}=\{(x,x)\in\R^{2}:x\geq 0\}$. Then $(E,\{W_{1},W_{2}\})$ does not possess the $(2,2)$-Riesz decomposition property.
\end{example}

Indeed, it is readily checked using Proposition~\ref{P:geoint} that $(E,\{W_{1},W_{2}\})$ is a Dedekind complete multi-lattice. In order to verify that $(E,\{W_{1},W_{2}\})$ does not have the $(2,2)$-Riesz decomposition property, let $y_{1}=(1,0)$ and $y_{2}=(1,1)$. Then $y_{1}\in W_{1},y_{2}\in W_{2}$, and $y_{1}+y_{2}=(2,1)$. Next let $x_{1}=(2,0)$ and $x_{2}=(0,1)$. Then $x_{1},x_{2}\in W_{1}+W_{2}=W_{1}$ and $x_{1}+x_{2}=(2,1)$. Let $(z_{11},z_{11}'), (z_{21},z_{21}')\in W_{1}$ and $(z_{12},z_{12}), (z_{22},z_{22})\in W_{2}$, so that $z_{11},z_{11}',z_{21},z_{21}',z_{12},z_{22}\geq 0$. Suppose that $(z_{11},z_{11}'), (z_{21},z_{21}'),(z_{12},z_{12})$, and $(z_{22},z_{22})$ satisfy the following:
\begin{itemize}
\item[(1)] $(z_{11},z'_{11})+(z_{21},z_{21}')=(1,0)$,
\item[(2)] $(z_{12},z_{12})+(z_{22},z_{22})=(1,1)$,
\item[(3)] $(z_{11},z_{11}')+(z_{12},z_{12})=(2,0)$,
\item[(4)] $(z_{21},z_{21}')+(z_{22},z_{22})=(0,1)$.
\end{itemize}
From (1) we have $z_{11}'=z_{21}'=0$. That $z_{12}=0$ now follows from (3). From (3) again, we get $z_{11}=2$, and hence, from (1), we have $z_{21}=-1$, a contradiction.

For contrast, Example~\ref{E:F(S)} furnishes examples of multi-lattices with the $(\aleph_{0},\aleph_{0})$-Riesz decomposition property.

With the Riesz decomposition property for multi-wedged spaces introduced, we are almost ready to present the main result of this paper. We need the following lemma.

\begin{lemma}\label{L:P}
Let $(F,V)$ be a preordered vector space, and let $P_{\D(V)}$ be a projection onto $\D(V)$, so that $P_{U}=I-P_{\D(V)}$ is a projection onto an algebraic complement $U$ of $\D(V)$.
\begin{itemize}
\item[(1)] For $x\in E$ we have $x\sim_{V}P_{U}(x)$.
\item[(2)] If $A,B\subseteq F$ are nonempty and $A\sim_{V}B$ then $P_{U}(A)$ and $P_{U}(B)$ are singleton sets and also $P_{U}(A)=P_{U}(B)$.
\end{itemize}
\end{lemma}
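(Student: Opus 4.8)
The plan is to argue entirely by elementary linear algebra of the projections, using two facts established earlier: that $\D(V)=V\cap(-V)$ is a linear subspace of $F$, and that, by definition of $\sim_V$ in the Notation section, $x\sim_V y$ holds exactly when $x-y\in\D(V)$. I would also record at the outset that, since $P_U=I-P_{\D(V)}$ is the projection onto $U$ along $\D(V)$, it is a linear idempotent with range $U$ and kernel $\D(V)$; in particular $P_U$ is linear and $P_U(\D(V))=\{0\}$. (I would also note in passing that the ``$x\in E$'' in part~(1) should read ``$x\in F$''.)

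For part~(1), given $x\in F$ I would simply observe that $x-P_U(x)=P_{\D(V)}(x)\in\D(V)$, which is precisely the assertion $x\sim_V P_U(x)$; nothing more is needed.

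For part~(2), the first step is to promote the hypothesis $A\sim_V B$ to the statement that $A$ is a single $\sim_V$-class (and likewise $B$). Fixing any $b\in B$, for $a,a'\in A$ one has $a-a'=(a-b)-(a'-b)$, and both differences lie in the subspace $\D(V)$, so $a-a'\in\D(V)$, i.e.\ $a\sim_V a'$. Applying the linear map $P_U$ and using $P_U(\D(V))=\{0\}$ gives $P_U(a)=P_U(a')$; hence $P_U(A)$ is a singleton, and symmetrically $P_U(B)$ is a singleton. Finally, picking $a\in A$ and $b\in B$, the hypothesis gives $a-b\in\D(V)$, so $P_U(a)-P_U(b)=P_U(a-b)=0$, and therefore $P_U(A)=\{P_U(a)\}=\{P_U(b)\}=P_U(B)$.

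There is no real obstacle in this lemma; the only point that takes a moment's thought is that $A\sim_V B$ already forces $\sim_V$-equivalence \emph{within} $A$ (and within $B$), which is exactly where the closedness of $\D(V)$ under subtraction is used.
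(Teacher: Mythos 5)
Your proposal is correct and follows essentially the same route as the paper: part (1) via $x-P_U(x)=P_{\D(V)}(x)\in\D(V)$, and part (2) by using that $P_U$ is linear and vanishes on $\D(V)$ (the paper just orders the two claims in (2) the other way, deducing the singleton property from $P_U(a)=P_U(b)=P_U(a')$ rather than from $a-a'\in\D(V)$ directly). No gaps; your remark that ``$x\in E$'' should read ``$x\in F$'' is also a fair catch of a typo in the statement.
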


\begin{proof}
(1) Let $x\in E$. Then $x-P_{U}(x)=(I-P_{U})(x)=P_{\D(V)}(x)\in\D(V)$.\\
(2) Let $A,B\subseteq F$ be nonempty sets such that $A\sim_{V}B$. Let $a\in A,b\in B$. Then $a\sim_{V}b$, so $a-b\in\D(V)$. Thus $P_{U}(a-b)=0$, and so $P_{U}(a)=P_{U}(b)$. This proves that $P_{U}(A)=P_{U}(B)$. Finally, note that if $a,a'\in A$ then for all $b\in B$ we have $P_{U}(a)=P_{U}(b)=P_{U}(a')$. Therefore, $P_{U}(A)$ is a singleton set.
\end{proof}

In Theorem~\ref{T:main} below, we consider singleton subsets of $F$ to be elements of $F$. 

\begin{theorem}\label{T:main}
Let $(E,\W)$ be a multi-wedged space, and let $(F,\{V\})$ be a Dedekind complete multi-lattice. Let $P_{\D(V)}$ be a projection onto $\D(V)$, so that $P_{U}=I-P_{\D(V)}$ is a projection onto an algebraic complement $U$ of $\D(V)$. Let $\kappa\in\N\cup\{\aleph_{0}\}$, and suppose $(E,\W)$ has the $(2,\kappa)$-Riesz decomposition property. The following hold.
\begin{itemize}
\item[(1)] $\bigl(\L(E,F),\L_{\W,V}(E,F)\bigr)$ is a $\kappa$-multi-lattice.
\item[(2)] If $\kappa=\aleph_{0}$ then $\big(\L(E,F),\L_{\W,V}(E,F)\big)$ is Dedekind complete.
\item[(3)] Let $\bigl(T_{i},\L_{W_{i},V}(E,F)\bigr)_{i\in I}$ in $\L(E,F)\times\L_{\W,V}(E,F)$ be multi-bounded above and suppose $|I|<\kappa+1$. For $R\in\L(E,F)$, we have $R\in\underset{i\in I}{\msup}\big(T_{i},\L_{W_{i},V}(E,F)\big)$ if and only if there exists $L\in\L(E,F)$ such that $L(\sum_{i\in I}W_{i})\subseteq\D(V)$ and
\[
R(x)=P_{U}\left(\msup\left\{\sum_{i\in I}T_{i}(y_{i}):y_{i}\in W_{i},\sum_{i\in I}y_{i}=x\right\}\right)+L(x)\quad \left(x\in\sum_{i\in I}W_{i}\right).
\]
In particular, if $\sum_{i\in I}W_{i}$ is generating and $V$ is a cone then $\underset{i\in I}{\msup}\big(T_{i},\L_{W_{i},V}(E,F)\big)$ is proper and
\[
\underset{i\in I}{\msup}\big(T_{i},\L_{W_{i},V}(E,F)\big)(x)=\sup\left\{\sum_{i\in I}T_{i}(y_{i}):y_{i}\in W_{i},\sum_{i\in I}y_{i}=x\right\}\quad \left(x\in\sum_{i\in I}W_{i}\right).
\]
\item[(4)] Let $\bigl(T_{i},\L_{W_{i},V}(E,F)\bigr)_{i\in I}$ in $\L(E,F)\times\L_{\W,V}(E,F)$ be multi-bounded above and assume that $|I|<\kappa+1$. For $R\in\L(E,F)$, we have $R\in\underset{i\in I}{\minf}\big(T_{i},\L_{W_{i},V}(E,F)\big)$ if and only if there exists $L\in\L(E,F)$ such that $L(\sum_{i\in I}W_{i})\subseteq\D(V)$ and
\[
R(x)=P_{U}\left(\minf\left\{\sum_{i\in I}T_{i}(y_{i}):y_{i}\in W_{i},\sum_{i\in I}y_{i}=x\right\}\right)+L(x)\quad \left(x\in\sum_{i\in I}W_{i}\right).
\]
In particular, if $\sum_{i\in I}W_{i}$ is generating and $V$ is a cone then $\underset{i\in I}{\minf}\big(T_{i},\L_{W_{i},V}(E,F)\big)$ is proper and
\[
\underset{i\in I}{\minf}\big(T_{i},\L_{W_{i},V}(E,F)\big)(x)=\inf\left\{\sum_{i\in I}T_{i}(y_{i}):y_{i}\in W_{i},\sum_{i\in I}y_{i}=x\right\}\quad \left(x\in\sum_{i\in I}W_{i}\right).
\]
\item[(5)] If $\kappa=\aleph_{0}$ then the formulas in (3) and (4) hold for any cardinality of $I$.
\item[(6)] If $W_{i}$ is generating for each $i\in I$ and $V$ is a cone then the multi-supremum in (3) and the multi-infimum in (4) are always proper.
\end{itemize}
\end{theorem}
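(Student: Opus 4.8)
The plan is to construct an explicit multi-supremum on the wedge $G:=\sum_{i\in I}W_i$ and then transport it to $E$ via Lemma~\ref{L:extn}. For $x\in G$ put
\[
A_x=\Bigl\{\textstyle\sum_{i\in I}T_i(y_i):y_i\in W_i,\ \sum_{i\in I}y_i=x\Bigr\}\subseteq F;
\]
this set is nonempty, since every element of $G$ is a finite sum of elements of the $W_i$ and terms with a common index merge via $W_i+W_i\subseteq W_i$. If $S\in\L(E,F)$ is a multi-upper bound of $(T_i,\L_{W_i,V}(E,F))_{i\in I}$ then $S(x)-\sum_i T_i(y_i)=\sum_i(S-T_i)(y_i)\in V$ for every such decomposition, so $S(x)$ is a $\leq_V$-upper bound of $A_x$; since $(F,\{V\})$ is Dedekind complete, $\msup A_x\neq\varnothing$. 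By Lemma~\ref{L:P} the set $P_U(\msup A_x)$ is a singleton contained in $\msup A_x$, and I define $R_0(x)$ to be that element; it is the quantity written $P_U\bigl(\msup\{\sum_i T_i(y_i):\dots\}\bigr)$ in parts~(3)--(4).

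I would then show $R_0$ is additive and positively homogeneous on $G$. Positive homogeneity follows from $A_{\lambda x}=\lambda A_x$ ($\lambda>0$), Proposition~\ref{P:ids}(3), and linearity of $P_U$. For additivity, $A_x+A_{x'}\subseteq A_{x+x'}$ is clear, and the reverse inclusion---the crux of the proof and the only use of the hypothesis---goes as follows: given $x+x'=\sum_i w_i$ with $w_i\in W_i$, apply the $(2,\kappa)$-Riesz decomposition property to the vectors $x,x'\in\sum_i W_i$ and the family $(w_i)$ (only the finitely many indices occurring in fixed decompositions of $x$, $x'$, $w$ matter, so the hypothesis applies for every admissible $|I|$, and puts no restriction on $|I|$ when $\kappa=\aleph_0$), obtaining $z_{1i},z_{2i}\in W_i$ with $x=\sum_i z_{1i}$, $x'=\sum_i z_{2i}$, $w_i=z_{1i}+z_{2i}$, whence $\sum_i T_i(w_i)\in A_x+A_{x'}$. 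So $A_{x+x'}=A_x+A_{x'}$. A short direct argument gives $\msup(B+C)\supseteq\msup B+\msup C$ for nonempty, $\leq_V$-bounded-above $B,C\subseteq F$; hence $\msup A_{x+x'}\supseteq\msup A_x+\msup A_{x'}$, and applying the linear $P_U$---both sides then singletons by Lemma~\ref{L:P}---gives $R_0(x+x')=R_0(x)+R_0(x')$.

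By Lemma~\ref{L:extn}, $R_0$ extends to an operator $R^*\colon E\to F$, with $R^*(x)=R_0(x)\in\msup A_x$ for $x\in G$ (Lemma~\ref{L:P}). Fix $i\in I$. For $w\in W_i$ the decomposition $y_i=w$, $y_j=0$ ($j\neq i$) shows $T_i(w)\in A_w$, so $R^*(w)\geq_V T_i(w)$, giving $R^*-T_i\in\L_{W_i,V}(E,F)$---condition~(1) of Definition~\ref{D:msup}; and if $S$ is any multi-upper bound, then $S(w)-a=\sum_j(S-T_j)(y_j)\in V$ for every $a=\sum_j T_j(y_j)\in A_w$, so $S(w)$ is a $\leq_V$-upper bound of $A_w$ and $R^*(w)\leq_V S(w)$, giving $S-R^*\in\L_{W_i,V}(E,F)$---condition~(2). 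As $i$ was arbitrary, $R^*\in\underset{i\in I}{\msup}(T_i,\L_{W_i,V}(E,F))\neq\varnothing$, which is part~(1). By Remark~\ref{R:setofmsup} and the identity $(\ast)$ of Proposition~\ref{P:wedgeofops} (with the single wedge $V$) the full set of multi-suprema is $R^*+\{L\in\L(E,F):L(\sum_i W_i)\subseteq\D(V)\}$, and an $R\in\L(E,F)$ lies in it exactly when $(R-R^*)(\sum_i W_i)\subseteq\D(V)$, i.e.\ when $R|_{\sum_i W_i}=R_0+L|_{\sum_i W_i}$ for some $L\in\L(E,F)$ with $L(\sum_i W_i)\subseteq\D(V)$; this is part~(3).

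Part~(4) follows by applying~(3) to $(-T_i)_{i\in I}$ and using Proposition~\ref{P:ids}(1) once in the operator space and once inside $F$. Since the decomposition step in the second paragraph consumes only finitely many wedges, when $\kappa=\aleph_0$ the construction is insensitive to $|I|$, which gives part~(5) and hence part~(2). For the ``in particular'' clauses and part~(6): if some $W_i$ is generating then $\sum_i W_i$ is generating (as $\sum_i W_i-\sum_i W_i\supseteq W_i-W_i=E$), and if additionally $V$ is a cone then $\D(V)=\{0\}$, so $P_U=I$, $R_0(x)=\sup A_x$, and by Proposition~\ref{P:wedgeofops} $\bigcap_i\L_{W_i,V}(E,F)$ is a cone, so by Remark~\ref{R:setofmsup} the multi-supremum is proper. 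The main obstacle is the reverse inclusion $A_{x+x'}\subseteq A_x+A_{x'}$ and its promotion through $\msup$ and $P_U$ to additivity of $R_0$; the remainder is bookkeeping with Lemmas~\ref{L:P} and~\ref{L:extn}, Remark~\ref{R:setofmsup}, and Proposition~\ref{P:wedgeofops}.
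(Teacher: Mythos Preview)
Your proof is correct and follows essentially the same route as the paper: define $R_0$ on $\sum_i W_i$ via $P_U$ applied to the pointwise multi-supremum, use the $(2,\kappa)$-Riesz decomposition property to establish additivity, extend by Lemma~\ref{L:extn}, verify the multi-supremum conditions directly, and finish with Remark~\ref{R:setofmsup} and Proposition~\ref{P:wedgeofops}. The only organizational difference is that you first prove the set equality $A_{x+x'}=A_x+A_{x'}$ and then invoke the general fact $\msup(B+C)\supseteq\msup B+\msup C$, whereas the paper establishes the two inequalities $M_{x_1}+M_{x_2}\leq_V M_{x_1+x_2}$ and $M_{x_1+x_2}\leq_V M_{x_1}+M_{x_2}$ directly---a cosmetic distinction.
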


\begin{proof} 
We prove (1), (2), and (3) simultaneously, from which (4), (5), and (6) will follow, using Proposition~\ref{P:ids}(1) for (4). Let $I$ be a set with $|I|<\kappa+1$, and let $W_{i}\in\W\ (i\in I)$ so that $\big(T_{i},\L_{W_{i},V}(E,F)\big)_{i\in I}$ is a collection of elements in $\L(E,F)\times\L_{\W,V}(E,F)$. Suppose that $\big(T_{i},\L_{W_{i},V}(E,F)\big)_{i\in I}$ is multi-bounded above. For brevity, we use the symbols $\leq$, $\geq$, and $\sim$ to indicate $\leq_{V}$, $\geq_{V}$, and $\sim_{V}$, respectively.

By Remark~\ref{R:setofmsup} and Proposition~\ref{P:wedgeofops}, it suffices to prove that the map $R\colon\sum_{i\in I}W_{i}\rightarrow F$ defined by
\[
R(x)=P_{U}\left(\msup\left\{\sum_{i\in I}T_{i}(y_{i}):y_{i}\in W_{i},\sum_{i\in I}y_{i}=x\right\}\right)
\]
extends linearly to an element of $\underset{i\in I}{\msup}\big(T_{i},\L_{W_{i},V}(E,F)\big)$.

We begin by showing that $R$ is well-defined. Since $\big(T_{i},\L_{W_{i},V}(E,F)\big)_{i\in I}$ is multi-bounded above, there exists $S\in\L(E,F)$ such that $T_{i}(y_{i})\leq  S(y_{i})\ (y_{i}\in W_{i},i\in I)$. Next fixing $x\in\sum_{i\in I}W_{i}$, we have for all $y_{i}\in W_{i}\ (i\in I)$ for which $\sum_{i\in I}y_{i}=x$ that
\[
\sum_{i\in I}T_{i}(y_{i})\leq \sum_{i\in I}S(y_{i})=S(x).
\]
Thus the collection
\[
\left\{\sum_{i\in I}T_{i}(y_{i}):y_{i}\in W_{i},\sum_{i=1}^{n}y_{i}=x\right\}
\]
is multi-bounded above in $F$ with respect to the single wedge $V$. Hence
\[
\msup\left\{\sum_{i\in I}T_{i}(y_{i}):y_{i}\in W_{i},\sum_{i\in I}y_{i}=x\right\}\neq\varnothing.
\]
It follows from Lemma~\ref{L:P}(2) that $P_{U}\big(\msup\left\{\sum_{i\in I}T_{i}(y_{i}):y_{i}\in W_{i},\sum_{i\in I}y_{i}=x\right\}\big)$ is a single element in $F$, and hence $R$ is well-defined.

We next show that $R$ is additive. To this end, let $x_{1},x_{2}\in\sum_{i\in I}W_{i}$. For $x\in\sum_{i\in I}W_{i}$, define
\[
M_{x}=\msup\left\{\sum_{i\in I}T_{i}(y_{i}):y_{i}\in W_{i},\sum_{i\in I}y_{i}=x\right\}.
\]
If $y_{i},w_{i}\in W_{i}\ (i\in I)$, $\sum_{i\in I}y_{i}=x_{1}$, and $\sum_{i\in I}w_{i}=x_{2}$ then $y_{i}+w_{i}\in W_{i}\ (i\in I)$ and $\sum\limits_{i\in I}(y_{i}+w_{i})=x_{1}+x_{2}$. Moreover,
\[
\sum_{i\in I}T_{i}(y_{i})+\sum_{i\in I}T_{i}(w_{i})=\sum_{i\in I}T_{i}(y_{i}+w_{i})\leq M_{x_{1}+x_{2}}.
\]
By taking multi-suprema, we obtain
\[
M_{x_{1}}+M_{x_{2}}\leq M_{x_{1}+x_{2}}.
\]
On the other hand, suppose $y_{j}\in W_{j}\ (j\in I)$ and $x_{1}+x_{2}=\sum_{j\in I}y_{j}$. Since $(E,\W)$ has the $(2,\kappa)$-Riesz Decomposition property, there exists $z_{1j},z_{2j}\in W_{j}\ (j\in I)$ such that
\[
x_{1}=\sum_{j\in I}z_{1j},\quad x_{2}=\sum_{j\in I}z_{2j},\quad \text{and}\quad y_{j}=z_{1j}+z_{2j}\ (j\in I).
\]
Then
\[
\sum_{j\in I}T_{j}(y_{j})=\sum_{j\in I}T_{j}(z_{1j}+z_{2j})=\sum_{j\in I}T_{j}(z_{1j})+\sum_{j\in I}T_{j}(z_{2j})\leq M_{x_{1}}+M_{x_{2}}.
\]
By taking multi-suprema, we get $M_{x_{1}+x_{2}}\leq M_{x_{1}}+M_{x_{2}}$. Thus $M_{x_{1}+x_{2}}\sim M_{x_{1}}+M_{x_{2}}$. Therefore, by Lemma~\ref{L:P}(2),
\[
R(x_{1}+x_{2})=P_{U}(M_{x_{1}+x_{2}})=P_{U}(M_{x_{1}}+M_{x_{2}})=P_{U}(M_{x_{1}})+P_{U}(M_{x_{2}})=R(x_{1})+R(x_{2}).
\]
Hence $R$ is additive on $\sum_{i\in I}W_{i}$. Note that if $\kappa=\aleph_{0}$ then the condition $|I|<\kappa+1$ can be dropped in the argument above. Thus if $\kappa=\aleph_{0}$ then the additivity of $R$ holds regardless of the cardinality of $I$.

The additivity of $R$ implies $R(0)=0$. It now follows from Proposition~\ref{P:ids}(3) that $R$ is positively homogeneous. By Lemma~\ref{L:extn}, we know $R$ extends to an element of $\L(E,F)$, again denoted by $R$.

We next show that $R\in\underset{i\in I}{\msup}\big(T_{i},\L_{W_{i},V}(E,F)\big)$. To this end, fix $i\in I$. For $x\in W_{i}$, we have by Lemma~\ref{L:P}(1) that
\begin{align*}
R(x)\sim\msup\left\{\sum_{i\in I}T_{i}(y_{i}):y_{i}\in W_{i},\sum_{i\in I}y_{i}=x\right\}\geq T_{i}(x).
\end{align*}
Thus $R\geq_{i}T_{i}$ for all $i\in I$. Moreover, if $S\in\L(E,F)$ satisfies
\[
S\left(y_{i}\right)\geq T_{i}(y_{i})\quad (y_{i}\in W_{i}, i\in I)
\]
then for $y_{i}\in W_{i}\ (i\in I)$ with $\sum_{i\in I}y_{i}=x$ we get
\[
S(x)=S\left(\sum_{i\in I}y_{i}\right)=\sum_{i\in I}S\left(y_{i}\right)\geq \sum_{i\in I}T_{i}(y_{i}).
\]
Thus $S(x)\geq\msup\left\{\sum_{i\in I}T_{i}(y_{i}):y_{i}\in W_{i},\sum_{i\in I}y_{i}=x\right\}\sim R(x)$ for all $x\in\sum_{i\in I}W_{i}$. In particular, for each $i\in I$, we have $S(x)\geq R(x)$ for all $x\in W_{i}$. Hence $S\geq_{i}R$ for all $i\in I$. We conclude that $R\in\underset{i\in I}{\msup}\big(T_{i},\L_{W_{i},V}(E,F)\big)$. Noting that the properness condition in (3) follows from Corollary~\ref{C:pmsupofops}, the proof for parts (1), (2), and (3) is complete.
\end{proof}

Let $(E,W)$ and $(F,V)$ be preordered vector spaces. We say that a map $T\colon E\rightarrow F$ is \textit{order bounded} if for every $A\subseteq E$ for which $x\leq A\leq y$ for some $x,y\in E$, there exists $w,z\in F$ such that $w\leq T(A)\leq z$. It is easily checked that the set of all order bounded linear maps $\L_{b}(E,F)$ from $E$ to $F$ forms a preordered vector space. It follows from Proposition~\ref{P:wedgeofops} that if $W$ is generating and $V$ is a cone then $\L_{b}(E,F)$ is an ordered vector space. Under the assumption that $V$ is a cone, it is readily checked that if $I$ is finite and $(T_{i})_{i\in I}$ is a collection of elements of $\L_{b}(E,F)$ then for all $x\in W$,
\[
\left\{\sum_{i\in I}T_{i}(y_{i}):y_{i}\in W,\sum_{i\in I}y_{i}=x\right\}
\]
is bounded above. Moreover, using two consecutive induction arguments, one easily verifies that a preordered vector space with the $(2,2)$-Riesz decomposition property also has the $(\aleph_{0},\aleph_{0})$-Riesz decomposition property. Using these facts in conjunction with Theorem~\ref{T:main}, one readily proves our next corollary. Note that $(F,V)$ in the Corollary~\ref{C:classic} is not necessarily a vector lattice since sets of two elements in a general ordered vector space need not be bounded above.

\begin{corollary}\label{C:classic}
Suppose $(E,W)$ is a preordered vector space with the $(2,2)$-Riesz decomposition property and that $W$ is generating. Let $(F,V)$ be an ordered vector space such that for every nonempty $A\subseteq F$ that is bounded above, $\sup A$ exists in $F$ (that is, $(F,\{V\})$ is a Dedekind complete multi-lattice). Then $\L_{b}(E,F)$ is a Dedekind complete vector lattice. For a finite or infinite and bounded-above collection $(T_{i})_{i\in I}$ in $\L_{b}(E,F)$ and $x\in W$,
\[
\underset{i\in I}{\sup}(T_{i})(x)=\sup\left\{\sum_{i\in I}T_{i}(y_{i}):y_{i}\in W,\sum_{i\in I}y_{i}=x\right\}.
\]
\end{corollary}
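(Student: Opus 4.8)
The plan is to specialize Theorem~\ref{T:main} to the single-wedged space $(E,\{W\})$ and the Dedekind complete multi-lattice $(F,\{V\})$ with $\kappa=\aleph_{0}$, and then to bridge the difference between $\L(E,F)$ and $\L_{b}(E,F)$ (the cases $E=\{0\}$ or $F=\{0\}$ being trivial). First I would record the reductions. The double induction mentioned above promotes the $(2,2)$-Riesz decomposition property of $(E,W)$ to the $(2,\aleph_{0})$-Riesz decomposition property, so Theorem~\ref{T:main} applies. Since $W$ is generating and $V$ is a cone, Proposition~\ref{P:wedgeofops} shows $\L_{W,V}(E,F)$ is a cone; and a positive operator is order bounded (if $x\le_{W}A\le_{W}y$ then $T(x)\le_{V}T(A)\le_{V}T(y)$), so $\L_{W,V}(E,F)\subseteq\L_{b}(E,F)$ and the preorder that $\L_{W,V}(E,F)$ induces on $\L(E,F)$ restricts to the ordered-vector-space order of $\L_{b}(E,F)$. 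Consequently a collection $(T_{i})_{i\in I}$ in $\L_{b}(E,F)$ is bounded above precisely when $\bigl(T_{i},\L_{W,V}(E,F)\bigr)_{i\in I}$ is multi-bounded above in $\bigl(\L(E,F),\{\L_{W,V}(E,F)\}\bigr)$.

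Next I would run Theorem~\ref{T:main} on such a bounded-above collection of arbitrary cardinality, part~(5) removing the cardinality restriction since $\kappa=\aleph_{0}$. Because $\sum_{i\in I}W=W$ is generating, $V$ is a cone, and $\D(V)=\{0\}$ forces $P_{U}=I$, parts~(3) and~(5) give that $R:=\underset{i\in I}{\msup}\bigl(T_{i},\L_{W,V}(E,F)\bigr)$ is proper with $R(x)=\sup\bigl\{\sum_{i\in I}T_{i}(y_{i}):y_{i}\in W,\ \sum_{i\in I}y_{i}=x\bigr\}$ for $x\in W$. To place $R$ in $\L_{b}(E,F)$, fix $i_{0}\in I$: since $R$ is in particular a multi-upper bound, $R-T_{i_{0}}\in\L_{W,V}(E,F)\subseteq\L_{b}(E,F)$, so $R=T_{i_{0}}+(R-T_{i_{0}})\in\L_{b}(E,F)$; and being the least upper bound of $\{T_{i}\}$ in $\L(E,F)$, $R$ is a fortiori the least upper bound of $\{T_{i}\}$ in $\L_{b}(E,F)$. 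Running this over all bounded-above subsets of $\L_{b}(E,F)$ shows $\L_{b}(E,F)$ is Dedekind complete, and simultaneously yields the displayed Riesz--Kantorovich formula.

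To finish, I must show $\L_{b}(E,F)$ is a vector lattice, i.e.\ that finite subsets admit suprema; translating and negating, this reduces to showing $\{T,0\}$ is bounded above for every $T\in\L_{b}(E,F)$, equivalently that $T$ is dominated by a positive operator. This is the one input not delivered by the statement of Theorem~\ref{T:main}---applying that theorem to $(T,0)$ would already presuppose $\{T,0\}$ bounded above---so I would establish it directly, by the classical Kantorovich construction (in essence the two-operator case of the argument inside the proof of Theorem~\ref{T:main}): define $S\colon W\to F$ by $S(x)=\sup\{T(y):0\le_{V}y\le_{V}x\}$, well-defined because this set is the finitely indexed Riesz--Kantorovich set of $(T,0)$, hence bounded above, and $F$ is Dedekind complete; $S$ is positively homogeneous, the $(2,2)$-Riesz decomposition property of $E$ supplies additivity (any $y$ with $0\le_{V}y\le_{V}x_{1}+x_{2}$ splits as $y_{1}+y_{2}$ with $0\le_{V}y_{k}\le_{V}x_{k}$), and Lemma~\ref{L:extn} extends $S$ to $E$ since $W$ is generating; finally $S\ge_{V}T$ and $S\ge_{V}0$ on $W$. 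With $\{T,0\}$ bounded above, its supremum exists by the Dedekind completeness just proved, so finite suprema (and, by negation, infima) exist and are given by the formulas of Theorem~\ref{T:main}(3) and~(4).

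The hard part will be exactly this last step. As the remark preceding the corollary stresses for $F$ itself, Dedekind completeness in the sense ``bounded-above sets have suprema'' does not by itself imply the lattice property---two-element sets in a general ordered vector space need not be bounded above---so one genuinely needs that order bounded operators are dominated by positive operators, and it is precisely there that the generating hypothesis on $W$ together with the Riesz decomposition property of $E$ do the essential work; everything else is bookkeeping on top of Theorem~\ref{T:main}.
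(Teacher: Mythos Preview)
Your proposal is correct and follows essentially the same approach as the paper's paragraph preceding the corollary: promote the $(2,2)$-Riesz decomposition property to the $(\aleph_{0},\aleph_{0})$ case, observe that for a finite family of order bounded operators the Riesz--Kantorovich set is bounded above in $F$ (the paper states this for all finite $I$ at once, while you specialize to $\{T,0\}$ and then bootstrap), and run the construction from the proof of Theorem~\ref{T:main}. One slip to fix: in your Kantorovich construction the inequalities $0\le_{V}y\le_{V}x$ and $0\le_{V}y_{k}\le_{V}x_{k}$ should read $\le_{W}$, since $x,y,x_{k},y_{k}\in E$.
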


\begin{remark}\label{R:classical}
Reducing Corollary~\ref{C:classic} to the classical case, the above formula may seem new, since most authors seem to only consider the Riesz Kantorovich formulas for a finite or bounded above upwards directed infinite collection of operators. But in fact, the somewhat hidden \cite[Lemma~36.3]{LilZan} provides similar Riesz-Kantorovich formulas for the supremum of any finite or bounded above infinite collection of operators. The previous corollary shows that \cite[Lemma~36.3]{LilZan} also holds in this more general setting.
\end{remark}

The special case in Theorem~\ref{T:main} where $(F,\{V\})=(\R,\{\R^{+}\})$, as in the classical theory, is of particular interest. For a multi-wedged space $(E,\W)$, let $E'$ denote the algebraic dual of $E$, and let $\W'$ be the collection $\{W':W\in\W\}$, where $W'\subseteq E'$ is the dual wedge of $W\in\W$. We define the \textit{dual} of $(E,\W)$ to be the multi-wedged space $(E',\W')$. 

\begin{corollary}\label{C:main}
	Let $(E,\W)$ be a multi-wedged space with the $(2,\kappa)$-Riesz decomposition property, where $\kappa\in\N\cup\{\aleph_{0}\}$.
	\begin{itemize}
		\item[(1)] $(E',\W')$ is a $\kappa$-multi-lattice.
		\item[(2)] If $\kappa=\aleph_{0}$ then $(E',\W')$ is Dedekind complete.
		\item[(3)] Let $(\phi_{i},W_{i}')_{i\in I}$ be multi-bounded above in $E'\times\W'$ with $|I|<\kappa+1$. For $\psi\in E'$, we have $\psi\in\underset{i\in I}{\msup}(\phi_{i},W_{i}')$ if and only if there exists $\xi\in E'$ such that $\xi(\sum_{i\in I}W_{i})=\{0\}$ and
		\[
		\psi(x)=\sup\left\{\sum_{i\in I}\phi_{i}(y_{i}):y_{i}\in W_{i},\sum_{i\in I}y_{i}=x\right\}+\xi(x)\quad \left(x\in\sum_{i\in I}W_{i}\right).
		\]
		In particular, if $\sum_{i\in I}W_{i}$ is generating then $\underset{i\in I}{\msup}\big(\phi_{i},W_{i}'\big)$ is proper and
		\[
		\underset{i\in I}{\msup}\big(\phi_{i},W_{i}'\big)(x)=\sup\left\{\sum_{i\in I}\phi_{i}(y_{i}):y_{i}\in W_{i},\sum_{i\in I}y_{i}=x\right\}\quad \left(x\in\sum_{i\in I}W_{i}\right).
		\]
		\item[(4)] Let $(\phi_{i},W_{i}')_{i\in I}$ be multi-bounded below in $E'\times\W'$ with $|I|<\kappa+1$. For $\psi\in E'$, we have $\psi\in\underset{i\in I}{\minf}(\phi_{i},W_{i}')$ if and only if there exists $\xi\in E'$ such that $\xi(\sum_{i\in I}W_{i})=\{0\}$ and
		\[
		\psi(x)=\inf\left\{\sum_{i\in I}\phi_{i}(y_{i}):y_{i}\in W_{i},\sum_{i\in I}y_{i}=x\right\}+\xi(x)\quad \left(x\in\sum_{i\in I}W_{i}\right).
		\]
		In particular, if $\sum_{i\in I}W_{i}$ is generating then $\underset{i\in I}{\minf}\big(\phi_{i},W_{i}'\big)$ is proper and
		\[
		\underset{i\in I}{\minf}\big(\phi_{i},W_{i}'\big)(x)=\inf\left\{\sum_{i\in I}\phi_{i}(y_{i}):y_{i}\in W_{i},\sum_{i\in I}y_{i}=x\right\}\quad \left(x\in\sum_{i\in I}W_{i}\right).
		\]
		\item[(5)] If $\kappa=\aleph_{0}$ then the formulas in (3) and (4) hold for any cardinality of $I$.
		\item[(6)] If $W_{i}$ is generating for each $i\in I$ then the multi-supremum in (3) and the multi-infimum in (4) are always proper.
	\end{itemize}
\end{corollary}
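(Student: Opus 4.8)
The plan is to derive Corollary~\ref{C:main} as the specialization of Theorem~\ref{T:main} to the codomain $(F,\{V\})=(\R,\{\R^{+}\})$. The first step is to collect the simplifications this choice affords. Since $\R^{+}$ is a cone, $\D(\R^{+})=\{0\}$, so the projection $P_{\D(V)}$ is the zero map and $P_{U}=I$ is the identity on $\R$; moreover $(\R,\{\R^{+}\})$ is a Dedekind complete multi-lattice, because every nonempty subset of $\R$ bounded above has a supremum. In particular the recurring hypothesis ``$V$ is a cone'' in Theorem~\ref{T:main} is automatically met, and the ``$W_{i}$ generating'' hypothesis in part (6) there forces $\sum_{i\in I}W_{i}$ to be generating.

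The second step is to identify the operator space with the dual: $\L(E,\R)=E'$ by definition, and for each $W\in\W$ the wedge $\L_{W,\R^{+}}(E,\R)$ of $(W,\R^{+})$-positive functionals is precisely the dual wedge $W'$. Hence $\L_{\W,\R^{+}}(E,\R)=\W'$, and $\big(\L(E,\R),\L_{\W,\R^{+}}(E,\R)\big)$ is exactly the dual multi-wedged space $(E',\W')$. I would also observe that for a nonempty $A\subseteq\R$ bounded above with respect to $\R^{+}$, the set $\msup A$ taken relative to the single cone $\R^{+}$ equals the singleton $\{\sup A\}$, so the multi-suprema that appear inside the formula of Theorem~\ref{T:main}(3) are ordinary suprema in $\R$.

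With these identifications in hand, parts (1), (2), (5), and (6) of the corollary are immediate transcriptions of the corresponding parts of Theorem~\ref{T:main}. For part (3), I would invoke Theorem~\ref{T:main}(3) and substitute $P_{U}=I$ and $\D(V)=\{0\}$: a functional $\psi\in E'$ belongs to $\underset{i\in I}{\msup}(\phi_{i},W_{i}')$ if and only if there is $\xi\in\L(E,\R)=E'$ with $\xi\big(\sum_{i\in I}W_{i}\big)=\{0\}$ such that $\psi(x)=\msup\{\sum_{i\in I}\phi_{i}(y_{i}):y_{i}\in W_{i},\ \sum_{i\in I}y_{i}=x\}+\xi(x)$ for $x\in\sum_{i\in I}W_{i}$, where the multi-supremum is just the supremum; the ``in particular'' clause is the properness assertion in Theorem~\ref{T:main}(3) (equivalently Corollary~\ref{C:pmsupofops}), which applies since $\R^{+}$ is a cone. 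Part (4) then follows from part (3) via Proposition~\ref{P:ids}(1), mirroring the deduction of Theorem~\ref{T:main}(4) from its (3). Because the whole argument is a substitution into an already-proved theorem, I do not expect a genuine obstacle; the only places requiring a moment's care are checking that $\L_{W,\R^{+}}(E,\R)$ is the dual wedge $W'$ and that $\msup$ over $\R^{+}$ collapses to $\sup$ --- these two facts are exactly what cause the $P_{U}$-term and the $\D(V)$-valued side condition of Theorem~\ref{T:main} to assume the clean forms recorded in the corollary.
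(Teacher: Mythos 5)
Your proposal is correct and is exactly the route the paper intends: Corollary~\ref{C:main} is the specialization of Theorem~\ref{T:main} to $(F,\{V\})=(\R,\{\R^{+}\})$, using the identifications $\L(E,\R)=E'$, $\L_{W,\R^{+}}(E,\R)=W'$, $\D(\R^{+})=\{0\}$, $P_{U}=I$, and the collapse of $\msup$ over $\R^{+}$ to the ordinary supremum. The paper offers no separate argument beyond this substitution, so your write-up matches its proof in both substance and structure.
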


We conclude with an example, which relies on the corollary above and illustrates several fundamental concepts from this paper.

\begin{example}\label{E:F(S)}
Let $S$ be a nonempty set, and consider the vector space $F(S)$ of real-valued functions on $S$. For $s \in S$, let $W_s = \{ f \in F(S): f(s) \geq 0 \}$,
and let $\mathcal{W} = \{ W_s: s \in S\}$. Then $(F(S), \W)$ is a multi-wedged space. If $(f_i, W_{s_i})_{i\in I}$ is multi-bounded above then for any multi-upper bound $g$, we have $g(s_{i})\geq f_i(s_{i})$ for all $i\in I$. It follows that
$$ \msup_{i\in I}(f_i, W_{s_i}) = \big\{f \in F(S): f(s_{i}) = \sup \{ f_j(s_{i}): s_j = s_{i}\}\ \text{for all}\ i\in I \big\}. $$
Hence $(F(S), \W)$ is a Dedekind complete multi-lattice.

Consider the subspace $F_0(S)$ of all functions $f \in F(S)$ with finite support and the corresponding restrictions of the wedges $\W = \{W_s: s \in S\}$, also denoted by $\W$. Then $(F_0(S), \W)$ is a multi-wedged space as well, which inherits the $\aleph_0$-multi-lattice structure from $(F(S), \W)$. However, if $S$ is infinite then $F_0(S)$ is not Dedekind complete. To illustrate, define $e_{s}(t)=\delta_{st}\ (s,t\in S)$, where $\delta_{st}$ is the Kronecker delta function. Note that $(-e_s, W_s)_{s\in S}$ is multi-bounded above (by $0$). If $f \in F_0(S)$ is a multi-upper bound of $(-e_s, W_s)_{s\in S}$ then for some $s_{0}$ not in the support of $f$, define $g\in F_{0}(S)$ by
\[
g(s)=\begin{cases} f(s) &\mbox{if } s\neq s_{0} \\
-1 & \mbox{if } s=s_{0} \end{cases}.
\]
Then $g$ is another multi-upper bound of $(-e_s, W_s)_{s\in S}$, but $f\nleq_{s_{0}}g$.

We next show that $F(S)$ and $F_0(S)$ have the $(\aleph_0,\aleph_0)$-Riesz decomposition property. Let $y_1 \in W_{s_1}, \ldots, y_m \in W_{s_m}$ and $x_1, \ldots, x_n \in \sum_{j=1}^m W_{s_j}$ be such that $\sum_{i=1}^n x_i = \sum_{j=1}^m y_j$. We seek functions $z_{ij}\in W_{s_{j}}\ (1\leq i\leq n, 1\leq j\leq m)$ for which
$$ x_i = \sum_{j=1}^m z_{ij}\quad (1\leq i\leq n) \quad \mbox{and} \quad y_j = \sum_{i=1}^n z_{ij}\quad (1\leq j\leq m). $$
We may assume that $n,m \geq 2$, otherwise the result is trivial. First suppose that all wedges $W_{s_j}$ are the same, so $s_1 = \cdots = s_m$. In this case one can easily find $z_{ij}(s)$ satisfying the requirements for any $s \not= s_1$. For $s=s_1$, we simply use the Riesz decomposition property of $\R$ to find the required $z_{ij}(s_1)$.

Therefore, we assume without loss of generality that $W_{s_1} \not= W_{s_2}$. In this case, we claim that it suffices to find elements $z_{ij}\ (1 \leq i \leq n-1, 1 \leq j \leq m)$ with the following properties.
\begin{itemize}
	\item[(1)] For $1 \leq i \leq n-1$ and $1 \leq j \leq m$, we have $z_{ij}(s_j) = 0$.
	\item[(2)] For $1 \leq i \leq n-1$, we have $\sum_{j=1}^m z_{ij} = x_i$.
\end{itemize}
Indeed, if (1) and (2) are satisfied then for $1 \leq j \leq m$ and $z_{nj} = y_j - \sum_{i=1}^{n-1} z_{ij} \in W_{s_j}$,
$$ \sum_{j=1}^m z_{nj} = \sum_{j=1}^m (y_j - \sum_{i=1}^{n-1} z_{ij}) = \sum_{j=1}^m y_j - \sum_{i=1}^{n-1} x_i = x_n $$
and
\[
\sum_{i=1}^{n}z_{ij}=y_{j}.
\]
For $1 \leq i \leq n-1$, define
\[ \begin{array}{ccc}
z_{i1}(s)=\begin{cases} 0 &\mbox{if } s=s_{1} \\
x_i(s) & \mbox{if } s \not= s_1 \end{cases}\ & \text{and}\ & z_{i2}(s)=\begin{cases} x_{i}(s) &\mbox{if } s=s_{1} \\
0 & \mbox{if } s \not= s_1 \end{cases}.
\end{array} \]
Set $z_{ij} = 0$ for all $1 \leq i \leq n-1$ and $3 \leq j \leq m$. Now it is easily verified that properties (1) and (2) are satisfied.

For $s \in S$, define the wedge $V_s = \{\lambda e_s: \lambda \geq 0\}$ in $F(S)$. Under the canonical duality $\langle f,g\rangle=\sum_{s\in S}f(s)g(s)$, the algebraic dual of $F_{0}(S)$ is $F(S)$, and one easily verifies that the dual wedge of $W_s$ is $V_s\ (s\in S)$. By Corollary~\ref{C:main}, we see that $(F(S),\{V_{s}:s\in S\})$ is a Dedekind complete multi-lattice in which every multi-supremum is proper, since all wedges $W_s$ are generating.
\end{example}

\bibliography{mybib}
\bibliographystyle{amsplain}

\end{document}